\documentclass[10pt,reqno]{amsart}
\usepackage{url}
\usepackage{amsthm}
\usepackage{fullpage}
\usepackage{amsmath}
\usepackage{amssymb}
\usepackage{amsfonts}

\usepackage{enumitem}
\usepackage{mathscinet}
\usepackage{lipsum}
\usepackage[english]{babel}
\usepackage[autostyle]{csquotes}
\usepackage{bbold}
\usepackage{hyperref}

\hypersetup{
    colorlinks=true,
    linkcolor=red,
    citecolor=blue,
    }

\usepackage{graphicx}

\usepackage[utf8]{inputenc}
\usepackage[english]{babel}

\usepackage{color}
\usepackage{comment}

\newtheorem{thm}{Theorem}[section]
\newtheorem{definition}[thm]{Definition}
\newtheorem{theorem}[thm]{Theorem}
\newtheorem*{oseledec*}{Oseledec Theorem}
\newtheorem{cor}[thm]{Corollary}

\newtheorem{lemma}[thm]{Lemma}

\makeatletter
\def\moverlay{\mathpalette\mov@rlay}
\def\mov@rlay#1#2{\leavevmode\vtop{%
   \baselineskip\z@skip \lineskiplimit-\maxdimen
   \ialign{\hfil$\m@th#1##$\hfil\cr#2\crcr}}}
\newcommand{\charfusion}[3][\mathord]{
    #1{\ifx#1\mathop\vphantom{#2}\fi
        \mathpalette\mov@rlay{#2\cr#3}
      }
    \ifx#1\mathop\expandafter\displaylimits\fi}
\makeatother

\newcommand{\nocontentsline}[3]{}
\newcommand{\tocless}[2]{\bgroup\let\addcontentsline=\nocontentsline#1{#2}\egroup}

\usepackage{wasysym}



\def\Vol{\ensuremath{\mathrm{Vol}}}

\def\E{\ensuremath{\mathcal{E}}}
\def\Et{\ensuremath{\mathcal{E}_{\mathrm{top}}}}
\def\h{\ensuremath{h^\mathrm{BK}}}

\title{Neutralized Local Entropy and dimension bounds for invariant measures}

\author{S. Ben Ovadia$^{\star}$, F. Rodriguez-Hertz$^{\dagger}$}

\thanks{$^{\star}$ Department of Mathematics, Eberly College of Science, Pennsylvania State University, snir.benovadia@psu.edu \\ $\text{ }$$\text{ }$ $\text{ }$ $^\dagger$ Department of Mathematics, Eberly College of Science, Pennsylvania State University, fjr11@psu.edu}

\begin{document}
\maketitle
\begin{abstract}
We introduce a notion of a point-wise entropy of measures (i.e local entropy) called {\em neutralized local entropy}, and compare it with the Brin-Katok local entropy. We show that the neutralized local entropy coincides with Brin-Katok local entropy almost everywhere. Neutralized local entropy is computed by measuring open sets with a relatively simple geometric description. Our proof uses a measure density lemma for Bowen balls, and a version of a Besicovitch covering lemma for Bowen balls. As an application, we prove a lower point-wise dimension bound for invariant measures, complementing the previously established bounds for upper point-wise dimension.
\end{abstract}

\section{Introduction and main results}\label{intro}
One of the most useful tools in studying dynamical systems are dynamical covers and partitions of a measure. That is, given an invariant measure, a cover or a partition modulo the measure, whose elements have a significant structure w.r.t the dynamics. The pursuit for such objects creates a tension between searching for elements with a simple geometric description, with which it is easy to work, and searching for elements with significant dynamical structure, which allows to control their orbits and measure. Two charecteristic examples of this tension are geometric balls, and Bowen balls. 

\medskip
Geometric balls allow one to utilize strong geometric properties such as the Lebesgue density theorem, or the Besicovitch covering lemma. Bowen balls on the other hand, allow one to estimate their measure in terms of the entropy (see \cite{BK}), and control their image under the dynamics for a fixed amount of iterations; while having a possibly very complicated geometric shape. 

\medskip
Pesin theory allows one to linearize locally the action of the dynamics on typical orbits. However, the size of the neighborhood where the linearization is valid may deteriorate along the orbit, although in a sub-exponential rate. Given an ergodic measure with positive metric entropy, the Ruelle inequality says it must have some positive and some negative Lyapunov exponents (\cite{MargulisRuelleIneq}). When an orbit admits a central direction, i.e an invariant sub-space of the tangent space corresponding to $0$ Lyapunov exponents, the central direction may not be integrable into an invariant manifold. In addition, the differential may contract or expand tangent vectors in the central direction, in a sub-exponential way. These effects make it very hard, and generally not attainable, to get a simple description of the set of points which remain close to the orbit for a fixed amount of steps.

\medskip
Furthermore, even in the absence of $0$ Lyapunov exponents, the decay of the size of the Pesin chart (i.e neighborhood with a local linearization of the dynamics) does not allow to control the set of points which remain close to the orbit by a fixed distance (i.e Bowen balls). Hence, phasing-out sub-exponential effects such as the central direction or the deterioration of the size of Pesin charts becomes very useful. Removing these effects allows one to treat the action of the dynamics along an orbit as if they were linear and hyperbolic, and so simplifying greatly the geometric description of the set of points which shadow the orbit (or part of it).  

\medskip
Our goal in this paper is to address exactly this difficulty. Our proof relies on a sub-exponential measure density lemma over Bowen balls (rather than geometric balls as in the Lebesgue density lemma).

\medskip
In our setup $M$ is a closed Riemannian manifold, $d=\mathrm{dim}M\geq2$, and $f\in \mathrm{Diff}^{1+\beta}(M)$, $\beta>0$. Let $\mu$ be an $f$-invariant Borel probability measure. The purpose of this paper is to compute the {\em neutralized local entropy} defined by,
\begin{equation}\label{defNeutLocEnt}\mathcal{E}_\mu(x):=\lim_{r\to0}\limsup_{n\to\infty}\frac{-1}{n}\log \mu(B(x,n,e^{-rn})),\end{equation}

where $B(x,n,e^{-rn}):=\{y\in M: d(f^{i}(y),f^{i}(y)(x))\leq e^{-rn}, \forall 0\leq i\leq n\}$. One can check the following properties for the neutralized local entropy
:
\begin{enumerate}
\item[(a)] \begin{equation}\label{NeuLocEntBdd}\mathcal{E}_\mu(x)\in [h_\mu^\mathrm{BK}(x),d\cdot \log M_f]\text{ }\mu\text{-a.e,}\end{equation}
where $M_f:=\max_{x\in M}\{\|d_xf\|, \|d_xf^{-1}\|\}$, and $h_\mu^\mathrm{BK}(x)$ is the local entropy at $x$ given by the Brin-Katok formula, \cite{BK}.\footnote{$h_\mu^\mathrm{BK}(x)=\lim_{\epsilon\to0}\limsup\frac{-1}{n}\log\mu(B(x,n,\epsilon))= \lim_{\epsilon\to0}\liminf\frac{-1}{n}\log\mu(B(x,n,\epsilon))$, where the limits exist and the inequality holds $\mu$-a.e, and $B(x,n,\epsilon)=\{y\in M: d(f^{i}(x),f^{i}(y))\leq\epsilon,\forall 0\leq i\leq n\}$.} The upper bound is given by the classical Lemma which states that $\lim_{s\to0}\frac{\log\mu(B(x,s))}{\log s}\leq d$ for $\mu$-a.e $x$
, and by the fact that $B(x,n,e^{-rn})\supseteq B(x,M_f^{-n})$ for all $n\geq0$ and sufficiently small $r>0$.
\item[(b)] 	\begin{equation}\label{NeutLocEntInv}\mathcal{E}_\mu\circ f= \mathcal{E}_\mu\text{ }\mu\text{-a.e.}\end{equation}
This can be seen by the following two inequalities:
\begin{enumerate}
\item[(I)] $\mathcal{E}_\mu(f(x))\geq\mathcal{E}_\mu(x)$: since $$\mu(B(f^{-1}(x),n,e^{-rn}))=\mu(f[B(f^{-1}(x),n,e^{-rn})])\leq \mu(B(x,n,M_fe^{-rn})),$$
\item[(II)] $\mathcal{E}_\mu(f(x))\leq\mathcal{E}_\mu(x)$: since $$\mu(B(f^{-1}(x),n,e^{-rn}))\geq \mu(f^{-1}[B(x,n+1,e^{-r\frac{n}{n+1}\cdot(n+1)})])\geq \mu(B(x,n+1,e^{-r(n+1)})).$$
\end{enumerate}
\end{enumerate}

\medskip
The significance of the neutralized entropy is that it estimates the asymptotic measure of sets with a distinctive geometric shape. Unlike the sequence $\{B(x,n,r)\}_{n\geq0}$ which can develop a very complicated geometric shape for large $n$- due to a central direction, or even for a non-uniformly hyperbolic trajectory- the sequence $\{B(x,n,e^{-rn})\}_{n\geq0}$ can have a nice description a.e for any $r>0$, by neutralizing any sub-exponential effects. Sets with a more explicit geometric description are very useful for the construction of covers (or consequently even partitions), and so controlling the measure of such sets is important. 

\medskip
In fact, one can guess that by the lack of diversity for intrinsic dynamical invariants, the neutralized local entropy must coincide with other notions of local entropy (or the metric entropy in the ergodic case). This paper is dedicated to the proof of this statement for smooth systems. 

Thieullen \cite{Thieullen92} studied a similar notion to the neutralized local entropy, called $\alpha$-entropy, for certain systems on infinite-dimensional systems (see also \cite{Thieullen92b,Thieullen90}). Other generalizations of entropy have also been studied in terms of ergodic theory for some systems by \cite{TakensVerbitskiy2002,TakensVerbitskiy2003}.

\medskip
As an application of the neutralized local entropy, in Theorem \ref{LowDimBound} we prove a lower bound for the point-wise dimension of ergodic invariant measures: for almost every point
$$\liminf_{r\to 0}\frac{\log\mu(B(x,r))}{\log}\geq d^u+d^s,$$
where $d^u$ and $d^s$ are the point-wise dimension of the conditional measures of $\mu$ along the unstable and the stable laminations, resp. (see \textsection \ref{LowDim} for details).	This extends the previous result in \cite{AsymProd} for hyperbolic measures, and complements the upper bound in \cite{LedrappierYoungII}. In particular, using the notion of neutralized local entropy, the proof of Theorem \ref{LowDimBound} is relatively elementary as it only uses covers and does not require constructing adapted partitions. 

\section{Covering, differentation, and Ergodic theorems}\label{lemmas}

Let $M$ be a closed Riemannian manifold, $d=\mathrm{dim}M\geq2$, and let $f\in \mathrm{Diff}^{1+\beta}(M)$, $\beta>0$.

\begin{definition}
	Let $\mu$ be an $f$-invariant ergodic probability measure s.t $\chi(\mu)>0$. Denote by $(\chi_1,m_1,\ldots ,\chi_{\ell_{\underline{\chi}}},m_{\ell_{\underline{\chi}}})= \underline{\chi}(\mu)$ the Lyapunov exponents and dimension of $\mu$ in a decreasing order. 
\begin{enumerate}
	\item Let $0<\tau\leq \tau_{\underline{\chi}}:= \frac{1}{100d}\min\{\chi_{\ell_{\underline{\chi}}},\chi_{i+1}-\chi_i: i\leq \ell_{\underline{\chi}}-1\}$, and let $C_{\underline{\chi},\tau}(\cdot)$ be the Lyapunov change of coordinates for points in $\mathrm{LR}_{\underline{\chi}}=\{\text{Lyapunov regular points with an index }\tau_{\underline\chi}\}$ (see \cite{KM}).
	\item Let $\mathrm{PR}_{\underline{\chi}}=\{x\in\mathrm{LR}_{\underline{\chi}}:\limsup_{n\to\pm\infty}\frac{1}{n}\log\|C_{\underline{\chi},\tau}^{-1}(f^n(x))\|=0,\forall 0<\tau\leq \tau_ {\underline{\chi}}\}$, the set of {\em $\underline{\chi}$-Pesin regular} points which carries $\mu$. $\mathrm{PR}:=\bigcup_{\underline\chi}\mathrm{PR}_{\underline\chi}$ is called the set of {\em Pesin regular} points.
\item Given $x\in \mathrm{PR}_{\underline{\chi}}$, let $E_j(x)$ be the Oseledec subspace of $x$ corresponding to $\chi_j$.
\item Let $\widehat{E}_j:=C^{-1}_{\underline{\chi},\tau}(x)[E_j(x)]$, and notice that $j\neq j'\Rightarrow \widehat{E}_j \perp \widehat{E}_{j'}$. Write $\widehat{E}^\mathrm{cs}:=\oplus_{\chi_j\leq 0} \widehat{E}_j$. 
\item A {\em Pesin block} $\Lambda^{(\underline{\chi},\tau)}_\ell$ is a subset of $\bigcup_{|\underline{\chi}'-\underline\chi|_\infty< \tau}\mathrm{PR}_{\underline\chi'}$ which is a level set $[q_\tau\geq\frac{1}{l}]$ of a measurable function $q_\tau: \bigcup_{|\underline{\chi}'-\underline\chi|_\infty< \tau}\mathrm{PR}_{\underline\chi'}\to (0,1)$ s.t (a) $\frac{q_\tau\circ f}{q_\tau}=e^{\pm \tau}$, (b)  $q_\tau(\cdot)\leq\frac{1}{\|C^{-1}_{\underline\chi,\tau}(\cdot)\|^\frac{d}{\beta}}$
. Often we omit the subscript $\ell$ when the dependence on $\ell$ is clear from the context.
\end{enumerate}	
\end{definition}

\begin{lemma}[Besicovitch-Bowen covering lemma]\label{BBCL}
	Let $\Lambda^{(\underline{\chi},\tau)}$ ($0<\tau\leq\tau_{\underline{\chi}}$) be a Pesin block, and let $x_0\in\Lambda^{(\underline{\chi},\tau)}$. Let $B(x_0)$ be the Pesin chart of $x_0$ for $\Lambda^{(\underline{\chi},\tau)} $. Let $A\subseteq \Lambda^{(\underline{\chi},\tau)} \cap B(x_0)$ be a measurable subset. Then $A$ can be covered by a 
	cover of exponential Bowen balls of points in $A$ (i.e $B(\cdot,n,e^{-n\epsilon})$), with multiplicity bounded by $e^{3d\tau n}$, where $n$ is sufficiently large w.r.t $\Lambda^{(\underline{\chi},\tau)}$, and $\epsilon\geq 2\tau$.
\end{lemma}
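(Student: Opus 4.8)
The plan is to reduce the covering problem for exponential Bowen balls to a covering problem for Euclidean balls in the Pesin chart, where the classical Besicovitch covering lemma applies, and then to control the multiplicity loss incurred by this reduction. First I would fix the Pesin chart $\Psi_{x_0}=\exp_{x_0}\circ C_{\underline\chi,\tau}(x_0)$ mapping a Euclidean neighborhood of $0$ in $\widehat E = \oplus_j \widehat E_j$ to $B(x_0)$, and work entirely in these coordinates; on a Pesin block the distortion of $\Psi_{x_0}$ and of the changes of coordinates $C_{\underline\chi,\tau}(f^i x)$ is controlled by $e^{\pm\tau i}$, which is exactly the point of condition (a) in the definition of the block. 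The key geometric observation is that for $x\in A$ and $n$ large, the Bowen ball $B(x,n,e^{-n\epsilon})$, read in the chart at $x_0$, is comparable (up to the $e^{\pm\tau i}$ distortion along the orbit, summed to $e^{\pm O(\tau n)}$) to an \emph{ellipsoid} $\mathcal Q_n(x)$ whose axis in the direction $\widehat E_j$ has length $e^{-n(\epsilon - \chi_j)}$ if $\chi_j<\epsilon$ and is cut off at the chart size otherwise — because along the orbit the differential acts like the diagonal hyperbolic model $e^{n\chi_j}$ on $\widehat E_j$ up to sub-exponential error, which the Pesin-regularity hypothesis $\limsup\frac1n\log\|C^{-1}(f^n x)\|=0$ neutralizes.

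Next I would replace each ellipsoid $\mathcal Q_n(x)$ by an inscribed Euclidean ball $\widehat B(x,\rho_n(x))$ of radius $\rho_n(x)=e^{-n(\epsilon - \chi_1)}$ (the smallest axis, corresponding to the top exponent $\chi_1$, since $\epsilon\ge 2\tau$ and all $\chi_j\le\chi_1$), and apply the classical Besicovitch covering lemma to the family $\{\widehat B(x,\rho_n(x))\}_{x\in A}$ in $\widehat E$: this yields a subcover with multiplicity bounded by a dimensional constant $\mathcal N(d)$. The issue is that an inscribed ball is far too small — its $\widehat E^{\mathrm{cs}}$-directions are exponentially thinner than those of $\mathcal Q_n(x)$ — so passing back from the Besicovitch subcover of balls to the original Bowen balls reintroduces overlaps. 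The standard fix is a slicing argument: in the directions $\widehat E_j$ with $\chi_j<\epsilon$ one tiles the ellipsoid $\mathcal Q_n(x)$ by a grid of the inscribed balls; the number of grid cells is at most $\prod_{j\ge 2} e^{n(\chi_1-\chi_j)}\le e^{dn(\chi_1-\chi_{\ell_{\underline\chi}})}$, but one does better by noting that only the \emph{stable/central} directions and the gaps between exponents matter, and the definition of $\tau_{\underline\chi}$ forces each such factor to be absorbed — more carefully, one should instead run Besicovitch directly with the ellipsoid-shaped boxes (the metric $d_n$ is comparable to a norm with the ellipsoidal unit ball up to $e^{O(\tau n)}$ distortion, and Besicovitch holds for any fixed convex symmetric body with a constant depending only on $d$), so that the multiplicity is $\mathcal N(d)\cdot e^{O(\tau n)}$ from distortion alone, which is $\le e^{3d\tau n}$ for $n$ large.

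So the cleanest route, and the one I would actually carry out, is: (1) establish the two-sided inclusion $\Psi_{x_0}^{-1}\big(B(x,n,e^{-n\epsilon})\big)\supseteq x + e^{-O(\tau n)}\,\mathcal Q_n(x)$ and $\subseteq x + e^{O(\tau n)}\,\mathcal Q_n(x)$, where $\mathcal Q_n(x)$ is the fixed ellipsoid above, using Pesin regularity and the block property (a); (2) invoke the Besicovitch covering lemma for the norm whose unit ball is the normalized $\mathcal Q_n$ (a convex, centrally symmetric body), getting a subcover of the $\mathcal Q_n$-balls with multiplicity $\le \mathcal N(d)$; (3) translate back through $\Psi_{x_0}$ and absorb the distortion factors $e^{O(\tau n)}$ into the bound $e^{3d\tau n}$, valid once $n$ is large enough depending on $\Lambda^{(\underline\chi,\tau)}$. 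The main obstacle is step (1): one must show that the Bowen ball genuinely has this clean ellipsoidal shape despite (i) the sub-exponential drift of the Pesin charts along the orbit and (ii) the central directions $\chi_j=0$, where the differential neither contracts nor expands exponentially but may distort sub-exponentially — this is precisely where "neutralizing sub-exponential effects" via $e^{-n\epsilon}$ with $\epsilon\ge 2\tau$ does the work, since the sub-exponential distortion $e^{o(n)}$ is dominated by $e^{\tau n}$ for large $n$ while the exponential radius $e^{-n\epsilon}$ keeps us inside the shrinking Pesin chart. The constant $3d$ (rather than, say, $2d$) is the slack needed to simultaneously cover: the $d$-fold distortion from stacking $C_{\underline\chi,\tau}(f^i x)^{\pm1}$, the distortion of $\exp_{x_0}$, and the dimensional Besicovitch constant $\mathcal N(d)$ once $n\ge n_0(\Lambda)$ makes $\mathcal N(d)\le e^{d\tau n}$.
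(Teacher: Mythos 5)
Your reduction to the classical Besicovitch lemma has a genuine gap at step (2). The ellipsoid $\mathcal{Q}_n(x)$ is not a translate of one fixed convex symmetric body: its orientation is given by the Oseledec splitting at $x$, which varies from point to point across the chart $B(x_0)$. Since the eccentricity of $\mathcal{Q}_n(x)$ is of order $e^{\chi_1 n}$, an orientation discrepancy that is merely continuous or H\"older in the base point (it is of chart scale, not exponentially small in $n$) cannot be absorbed into an $e^{O(\tau n)}$ distortion budget: to cover a copy of the ellipsoid rotated by an angle $\theta$ by a dilate of the original one needs a dilation factor of order $1+\theta\,e^{\chi_1 n}$. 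Even if you restrict to the only pairs relevant for multiplicity, namely centers within $2e^{-\epsilon n}$ of each other, H\"older continuity of the splitting on the block only gives $\theta\lesssim e^{-\alpha\epsilon n}$, hence a comparison factor of order $e^{(\chi_1-\alpha\epsilon)n}$, which is still exponentially large when $\epsilon$ is as small as $2\tau$. So ``Besicovitch for the norm whose unit ball is $\mathcal{Q}_n$'' is not available, and your earlier tiling/slicing alternative is, as you yourself noted, too lossy; step (2) as stated does not close.

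The paper's proof sidesteps the orientation problem and is more elementary: it runs a greedy Vitali-type selection of balls $B(x_k,n,e^{-n\epsilon})$ with centers chosen in the not-yet-covered part of $A$, observes that the balls $B(x_k,n,\frac{1}{3}e^{-n\epsilon})$ are pairwise disjoint, and that if $B(x_j,n,e^{-n\epsilon})$ meets $B(x_k,n,e^{-n\epsilon})$ then $B(x_j,n,\frac{1}{3}e^{-n\epsilon})\subseteq B(x_k,n,3e^{-n\epsilon})$ by the triangle inequality for the dynamical metric. The multiplicity is then bounded by the ratio of the volumes of a tripled and a third-size exponential Bowen ball; your step (1) is used only in the form of two-sided volume estimates of the type $\Vol(B(\cdot,n,ce^{-\epsilon n}))=C^{\pm1}\,c^{O(d)}\,e^{-\underline{\chi}^u n}e^{-d_{\mathrm{cs}}\epsilon n}e^{\pm d\tau n}$, which are orientation-independent, giving multiplicity at most $C e^{2d\tau n}\le e^{3d\tau n}$ for large $n$; a simple coloring argument then produces the disjoint subfamilies. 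If you keep your chart/ellipsoid analysis only as the source of these volume bounds and replace step (2) by this volume-doubling count in the metric $d_n$, your argument goes through; note also that since all balls here have the same ``radius'' $e^{-n\epsilon}$, no genuinely Besicovitch-type selection over varying radii is needed.
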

The idea of the proof follows the principle steps of the proof of the classical Besicovitch covering lemma (see \cite[\textsection~18]{RealAnalysisBook}), using a certain volume doubling property for exponential Bowen balls.
\begin{proof}
Let $n\in\mathbb{N}$ sufficiently large so $e^{-\epsilon n}$ is smaller than the Pesin chart size for $\Lambda^{(\underline{\chi},\tau)}$. Set $A_1:=A$, and choose $x_1\in A_1$. Given $k$, set $A_{k+1}:=A\setminus \bigcup_{j=1}^kB(x_j,n,e^{-n\epsilon})$ and choose $x_{k+1}\in A_{k+1}$. Continue in this process as long as $A\setminus \bigcup_{j=1}^kB(x_j,n,e^{-n\epsilon})$ is not empty. 

\medskip
Claim 1: $\forall k< j$,  $B(x_j,n,\frac{1}{3}e^{-n\epsilon})\cap B(x_k,n,\frac{1}{3}e^{-n\epsilon}) =\varnothing$.

Proof: Otherwise, $\forall i\leq n$, $d(f^i(x_k), f^i(x_j))\leq \frac{2}{3}e^{-\epsilon n}$, whence $x_j\in B(x_k,n,e^{-n\epsilon})$, in contradiction to the choice of $\{x_l\}_{l\geq1}$.

\medskip
Claim 2: $\exists N\in\mathbb{N}$ s.t $\bigcup^N B(x_k,n,e^{-\epsilon n})\supseteq A$.

Proof: For any $k\geq 1$, $\Vol(B(x_k,n,\frac{1}{3}e^{-\epsilon n}))= C^{\pm1}\cdot e^{-\underline{\chi}^u n}\cdot e^{-d_\mathrm{cs}\epsilon n}e^{\pm d\tau n}$, where $e^{-\underline{\chi}^u n}=\prod_{i\leq \ell_{\underline{\chi}}: \chi_i>0}e^{-\chi_i n\cdot d_i}$, $d_i$ is the multiplicity of $\chi_i$, $d_\mathrm{cs}:=\sum_{i:\chi_i>0}d_i$, and $C$ is a constant depending on $M$, $d$, and $\Lambda^{(\underline\chi,\epsilon)}$. Then $N\leq \frac{\Vol(M)}{C^{-1}\cdot e^{-\underline{\chi}^un}\cdot e^{-d\tau n}\cdot e^{-d_\mathrm{cs}\epsilon n}}$.

\medskip
Claim 3: $\bigcup_{k=1}^NA_k\supseteq A$.

Proof: The process continues unless $A$ is covered.

\medskip
Claim 4: $\forall k\leq N$, $\#\{j\leq N: B(x_j,n,e^{-n\epsilon})\cap B(x_k,n,e^{-n\epsilon})\neq\varnothing \}\leq C_d e^{2dn\tau}$, where $C_d$ is a constant depending on $M$ and on $\Lambda^{(\underline\chi,\tau)}$. 

Proof: Let $k\leq N$, and let $k\neq j\leq N$ s.t $B(x_j,n,e^{-n\epsilon})\cap B(x_k,n,e^{-n\epsilon})\neq\varnothing$. Then
$$B(x_j,n,\frac{1}{3}e^{-n\epsilon})\subseteq B(x_j,n,e^{-n\epsilon}) \subseteq B(x_k,n,3e^{-n\epsilon}).$$
Then 
$$\#\{j\leq N: B(x_j,n,e^{-n\epsilon})\cap B(x_k,n,e^{-n\epsilon})\neq\varnothing \}\leq \frac{\max_{k\leq N}\Vol(B(x_k,n,3e^{-n\epsilon}))}{\min_{j\leq N} \Vol(B(x_j,n,\frac{1}{3}e^{-n\epsilon}))}\leq C^2 3^{2d}e^{2d\tau n}.$$

\medskip
Claim 5: We can divide $\{x_k\}_{k\leq N}$ into sub-collections $\mathcal{C}_i$, $i=1,\ldots, \lceil C_de^{2nd\tau}+1\rceil $, where for any $i\leq \lceil C_de^{2dn\tau}+1\rceil$, $\{B(x,n,e^{-\epsilon n}):x\in \mathcal{C}_i\}$ is a mutually disjoint collection. In particular, for $n$ sufficiently large, $\lceil C_de^{2n\tau}+1\rceil\leq e^{3dn\tau}$. 

Proof: Let $K_n:=\lceil C_d e^{2nd\tau}+1\rceil$. We associate $x_k$ with $\mathcal{C}_k$ for all $k\leq K_n$. For each $k> K_n$, we allocate it into one of the pre-existing collections in the following way. 

Given $B(x_{K_n+1},n,e^{-\epsilon n})$, by claim 4, there exists at least one $i_{K_n+1}\leq K_n$ s.t $B(x_{K_n+1},n,e^{-\epsilon n})\cap B(x_{i_{K_n+1}},n,e^{-\epsilon n}) =\varnothing$; so allocate $x_{K_n+1}$ to $\mathcal{C}_{i_{K_n+1}}$.

Next, consider $B(x_{K_n+2},n,e^{-\epsilon n})$, at least two of the $K_n+1$-first balls do not intersect it. If any of the balls $2,\ldots,K_n$ do not intersect $B(x_{K_n+2},n,e^{-\epsilon n})$, allocate it to the first associated collection as such. If all of the balls of $1,\ldots,i_{K_n+1}-1, i_{K_n+1}+1,\ldots, K_n$ intersect it, then allocate it to $\mathcal{C}_{i_{K_n+1}}$ which now contains three disjoint balls. 

We continue by induction. Assume that the balls $\{B(x_j,n,e^{-\epsilon n})\}{j\leq K_n+l-1}$ have all been allocated into one of the $K_n$-many disjoint sub-collections. Consider $B(x_{K_n+l},n,e^{-\epsilon n})$, which is disjoint from at least $l$ balls in $\{B(x_j,n,e^{-\epsilon n})\}{j\leq K_n+l-1}$. This implies that least  one of the collections $\mathcal{C}_i$, $i\leq K_n$, is disjoint from $B(x_{K_n+l},n,e^{-\epsilon n})$, to which we may allocate it.
\end{proof}

\begin{lemma}[Bowen-Lebesgue density lemma]
\label{LLYD}
Let $\mu$ be an $f$-invariant probability measure. Let $A$ be a measurable set s.t $\mu(A)>0$. Then for $\mu$-a.e $x\in A$, 
$$\lim_{r\to0}\limsup_{n\to\infty}\frac{-1}{n}\log\frac{\mu(B(x,n,e^{-nr})\cap A)}{\mu(B(x,n,e^{-nr}))}=0.$$ 
\end{lemma}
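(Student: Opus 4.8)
The plan is to run a Vitali-type argument adapted to exponential Bowen balls, using Lemma~\ref{BBCL} as the substitute for the classical Besicovitch/Vitali covering lemma. Fix $\delta>0$ and consider the ``bad set''
\[
A^\delta:=\Bigl\{x\in A:\ \limsup_{r\to0}\limsup_{n\to\infty}\tfrac{-1}{n}\log\tfrac{\mu(B(x,n,e^{-nr})\cap A)}{\mu(B(x,n,e^{-nr}))}>2\delta\Bigr\}.
\]
Since $\chi^s(\mu)>0$, after discarding a null set we may assume every point lies in some Pesin regular set $\mathrm{PR}_{\underline{\chi}}$; by $\sigma$-additivity it is enough to show $\mu(A^\delta\cap\Lambda)=0$ for each Pesin block $\Lambda=\Lambda^{(\underline{\chi},\tau)}$ with $\tau$ chosen small compared to $\delta$ (say $3d\tau<\delta$), and each such block may be further decomposed so that it sits inside a single Pesin chart $B(x_0)$. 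So it suffices to prove: if $A\subseteq\Lambda\cap B(x_0)$ is measurable with $\mu(A)>0$ and every $x\in A$ satisfies the above $\limsup>2\delta$ inequality, then $\mu(A)=0$ — a contradiction unless $A^\delta$ is null.

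The core step is the following. Fix a small open neighborhood $U$ of $A^\delta\cap\Lambda$ with $\mu(U)\le(1+\eta)\mu(A^\delta\cap\Lambda)$, $\eta$ tiny. For each $x\in A^\delta\cap\Lambda$ pick, along the sequences realizing the $\limsup$'s, a scale $r=r(x)$ small and an $n=n(x)$ large (also large enough for Lemma~\ref{BBCL}, and with $e^{-n r}$ small enough that $B(x,n,e^{-nr})\subseteq U$ — possible because Bowen balls shrink as $n\to\infty$) such that
\[
\mu\bigl(B(x,n(x),e^{-n(x)r(x)})\cap A\bigr)\ \le\ e^{-\delta n(x)}\,\mu\bigl(B(x,n(x),e^{-n(x)r(x)})\bigr).
\]
Apply Lemma~\ref{BBCL} with $\epsilon=r(x)$ (arranging $r\ge 2\tau$, which is consistent with taking $r\to0$ after $\tau$) to extract from this family a subcover of $A^\delta\cap\Lambda$ with multiplicity at most $e^{3d\tau n}$. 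Summing the displayed inequality over the subcover and using the multiplicity bound gives
\[
\mu(A\cap A^\delta\cap\Lambda)\ \le\ \sum \mu(B_j\cap A)\ \le\ e^{-\delta n}\sum\mu(B_j)\ \le\ e^{(3d\tau-\delta)n}\,\mu(U)\ \le\ e^{(3d\tau-\delta)n}(1+\eta)\mu(A^\delta\cap\Lambda).
\]
There is a bookkeeping nuisance here: the cover uses balls at different levels $n=n(x)$, so one should first partition $A^\delta\cap\Lambda$ according to (a rational approximation of) $r(x)$ and then exhaust it level by level, at each stage removing a definite proportion of the remaining measure and iterating — exactly the standard way the Vitali/Besicovitch argument is turned into a density statement. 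Each pass multiplies the leftover measure of $A\cap A^\delta\cap\Lambda$ by a factor $\le e^{(3d\tau-\delta)n}<1$ (since $3d\tau<\delta$ and $n$ large), forcing $\mu(A\cap A^\delta\cap\Lambda)=0$; but $A^\delta\subseteq A$, so $\mu(A^\delta\cap\Lambda)=0$, and hence $\mu(A^\delta)=0$. As $\delta>0$ was arbitrary this gives the claim, since $\tfrac{-1}{n}\log\tfrac{\mu(B\cap A)}{\mu(B)}\ge 0$ always (so the limit, once the $\limsup$ is shown to be $0$, is genuinely $0$).

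The main obstacle is making the multi-scale exhaustion rigorous: the covering lemma is stated for a fixed $n$, but the radii $e^{-n(x)r(x)}$ genuinely vary with $x$, so one must carefully stratify by scale, argue that at each scale a uniformly positive fraction of the not-yet-covered mass can be removed, and control the interaction between the open set $U$, the ambient Pesin chart, and the requirement $\epsilon\ge 2\tau$ in Lemma~\ref{BBCL} while still letting $r\to0$ at the end (the order of quantifiers — $\tau$ fixed first, then $r$ small, then $n\to\infty$ — has to be respected throughout). A secondary point is checking that $B(x,n,e^{-nr})\subseteq U$ can indeed be arranged simultaneously with $n$ large and $e^{-nr}$ below the Pesin chart size; this is where the exponential (rather than fixed) radius is essential, since Bowen balls $B(x,n,e^{-nr})$ do shrink to $\{x\}$ as $n\to\infty$ for fixed $r$.
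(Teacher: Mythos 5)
Your overall skeleton (restrict to a Pesin block inside a Pesin chart, pick Bowen balls along which the density deficit is at least $e^{-\delta n}$, cover by Lemma \ref{BBCL}, and beat the multiplicity $e^{3d\tau n}$ by the deficit since $3d\tau<\delta$) is the same as the paper's. But the step you yourself flag as a ``bookkeeping nuisance'' is a genuine gap, and the fix you sketch does not work as stated. Your displayed chain of inequalities is only valid if all selected balls share one level $n$ (and one $r$): Lemma \ref{BBCL} is a statement about a cover at a single fixed time $n$ and a single $\epsilon$, and it gives no control on overlaps between balls of different levels $n(x)$, nor does it extract a subcover from a prescribed family of balls of varying levels. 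The ``standard Vitali exhaustion'' you invoke to repair this does not transfer either: classically each pass removes a fraction of the remaining mass bounded below by the reciprocal of a universal Besicovitch constant, whereas here a disjoint subfamily produced by Lemma \ref{BBCL} captures only a proportion of order $e^{-3d\tau n}$, which degenerates as $n\to\infty$ (and $n$ must grow along the iteration), so the leftover measure need not tend to zero; your claim that each pass multiplies the leftover measure by $e^{(3d\tau-\delta)n}$ conflates the density deficit of the chosen balls with the proportion of mass they capture. There is also a quantifier-order slip: Lemma \ref{BBCL} requires $\epsilon\geq 2\tau$, so you cannot fix $\tau$ first and then take $r(x)$ arbitrarily small (the good radii guaranteed by your definition of $A^\delta$ may all lie below $2\tau$); the consistent order is to fix the contradiction scale $\lambda$, then one sufficiently small $r$, and only then a Pesin block with $\tau\leq\min\{r/2,\tau_{\underline\chi}\}$ and $3d\tau\leq\lambda/8$.

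The paper resolves the multi-scale problem without any exhaustion, by a stopping-time stratification at a single fixed $r$: assuming the limit is $\geq\lambda$ on a positive-measure set, one finds $A^{(1)}\subseteq A\cap\Lambda^{(\underline\chi,\tau)}$ of positive measure on which the inner $\limsup$ at this fixed $r$ exceeds $\tfrac{7\lambda}{8}$; for large $N$ one sets $n_x^N:=\min\{n\geq N:\ \tfrac{-1}{n}\log\tfrac{\mu(B(x,n,e^{-rn})\cap A)}{\mu(B(x,n,e^{-rn}))}\geq\tfrac{6\lambda}{8}\}$ and stratifies $A^{(1)}$ into the level sets $A^{(1)}_n=\{x:n_x^N=n\}$. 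Each $A^{(1)}_n$ is covered by balls at the single level $n$, so Lemma \ref{BBCL} applies and yields $\mu(A^{(1)}_n)\leq e^{-\frac{6\lambda}{8}n}e^{\frac{\lambda}{8}n}\leq e^{-\frac{5\lambda}{8}n}$ (no open neighborhood $U$ is needed; $\mu(M)=1$ suffices). Summing this geometric series over $n\geq N$ and letting $N\to\infty$ forces $\mu(A^{(1)})=0$, the desired contradiction. Replacing your multi-scale selection and iteration by this stopping-time decomposition turns your sketch into the paper's proof.
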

\begin{proof}
First, assume that $\exists \lambda>0$ s.t (w.l.o.g) for $\mu$-a.e $x\in A$,
$$\lim_{r\to0}\limsup\frac{-1}{n}\log\frac{\mu(B(x,n,e^{-r n})\cap A)}{\mu(B(x,n,e^{-r n}))}\geq \lambda.$$

Let $0<r<\frac{1}{3d}\frac{\lambda}{4}$ s.t $\mu(A^{(0)})>0$ where $$A^{(0)}:=\{x\in A: \limsup\frac{-1}{n}\log\frac{\mu(B(x,n,e^{-r n})\cap A)}{\mu(B(x,n,e^{-r n}))}\geq\frac{7\lambda}{8}\}.$$

Then it follows that $\mu(A^{(1)})>0$ where $A^{(1)}:=A^{(0)}\cap \Lambda^{(\underline\chi,\tau)}$ for some index $\underline\chi$ and $\tau\leq\min\{\frac{r}{2},\tau_{\underline\chi}\}$ (this can be achieved by first dividing the parameter space of $\underline\chi$ by the Oseledec dimensions, then by boxes around each $\underline\chi'$ of size $\frac{1}{2}\tau_{\underline{\chi}'}$, and then further by boxes of size $\tau:=\min\{\frac{r}{2},\frac{1}{2}\tau_{\underline\chi'}\}$; whence $\frac{1}{2}\tau_{\underline\chi'}\leq \tau_{\underline\chi}$).

\medskip
Let $N\geq1$ large and $x\in A ^{(1)} $, and set
\begin{align*}
n_x^N:=\min\{n\geq N: \frac{-1}{n}\log\frac{\mu(B(x,n,e^{-r n})\cap A)}{\mu(B(x,n,e^{-r n}))}\geq\frac{6\lambda}{8}\}.
\end{align*}

Set for all $n\geq N$,
$$A_n ^{(1)}:=\{x\in A ^{(1)}: n_x^N=n\}.$$

\medskip
By Lemma \ref{BBCL}, we can choose a finite subset $\mathcal{C}_n\subseteq A^{(1)}_n $ s.t $\bigcup_{x\in \mathcal{C}_n}B(x,n,e^{-rn})\supseteq A^{(1)}_n$ while $\{B(x,n,e^{-r n})\}_{x\in\mathcal{C}_n}$ has an overlap bound smaller than $e^{3d\tau n}\leq e^{3d\frac{r}{2}n}\leq e^{\frac{\lambda}{8}n}$ (for all $n$ large enough depending on $M$ and $\Lambda^{(\underline\chi,\tau)}$). Then,
\begin{align*}
	\mu(A ^{(1)}_n)= & \mu\left((\bigcup_{x\in\mathcal{C}_n}B(x,n,e^{-rn}))\cap A^{(1)}_n\right) \leq \sum_{x\in \mathcal{C}_n}\mu(B(x,n,e^{-rn})\cap A ^{(1)}_n) \leq 
\sum_{x\in \mathcal{C}_n}\mu(B(x,n,e^{-rn})\cap A )\\
 \leq &\sum_{x\in \mathcal{C}_n}\mu(B(x,n,e^{-rn}))e^{-\frac{6\lambda}{8}n}
\leq e^{-\frac{6\lambda}{8}n} e^{\frac{\lambda}{8}n}\mu(\bigcup_{x\in\mathcal{C}_n}B(x,n,e^{-rn}))\leq e^{-\frac{5\lambda}{8}n}.
\end{align*} 

Then, 
$$\mu(A ^{(1)})=\sum_{n\geq N}\mu(A ^{(1)}_n)\leq \sum_{n\geq N} e^{-\frac{5\lambda}{8} n} \leq (\sum_{n\geq0}e^{-\frac{5\lambda}{8} n})\cdot  e^{-\frac{5\lambda}{8} N}\xrightarrow[]{N\to\infty}0,\text{ a contradiction!}$$
\end{proof}

\noindent\textbf{Remark:} A possible heuristic way to interpret Lemma \ref{LLYD} is the following: we think of belonging to a measurable set as satisfying some property. Then, for almost any point which satisfies a certain property, more and more points which spend a long portion of their orbit close to this point inherit this property as well. That is, a portion of the exponential Bowen ball, with bigger and bigger exponential portion bounds, lies in the measurable set as well.

\begin{cor}[log-differentation lemma]\label{BLDT}
Let $\mu$ be an $f$-invariant probability measure, and let $g\in\mathcal{M}(\mu)$ be a measurable function and $A\in\mathcal{B}$ be a measurable set with $\mu(A)>0$. Then for $\mu$-a.e $x\in A$,
$$\lim_{\epsilon\to0}\limsup_{n\to\infty}\frac{-1}{n}\log\left(\frac{1}{\mu(B(x,n,e^{-n\epsilon}))}\int_{B(x,n,e^{-n\epsilon})}e^{-n|g(y)-g(x)|}\mathbb{1}_A(y)d\mu(y)\right)=0.$$
\end{cor}
\begin{proof}
Let $\delta>0$, and let $a\in \mathbb{R}$ s.t $\mu(E)>0$ where $E:=A\cap g^{-1}[[a-\frac{\delta}{2},a+\frac{\delta}{2}]]$. Let $x\in E$, $n\geq 0$, and $\epsilon>0$, then 
\begin{align*}
	\frac{1}{\mu(B(x,n,e^{-\epsilon n}))}\int_{B(x,n,e^{-\epsilon n})}e^{-n|g(y)-g(x)|}\mathbb{1}_A(y)d\mu(y)\geq& \frac{1}{\mu(B(x,n,e^{-\epsilon n}))}\int_{B(x,n,e^{-\epsilon n})\cap E}e^{-n\delta }d\mu(y)\\
	=& e^{-n\delta }\frac{\mu(B(x,n,e^{-\epsilon n})\cap E)}{\mu(B(x,n,e^{-\epsilon n}))}.
\end{align*}
Then by Lemma \ref{BBCL}, for $\mu$-a.e $x\in E$, $\lim\limits_{\epsilon\to0}\limsup\limits_{n\to\infty}\frac{-1}{n}\log\frac{1}{\mu(B(x,n,e^{-n\epsilon}))}\int\limits_{B(x,n,e^{-n\epsilon})}e^{-n|g(y)-g(x)|}\mathbb{1}_A(y)d\mu(y)\leq \delta$. Since $\delta >0$ was arbitrary (and the limit is independent of $a$), we are done. 
\end{proof}

\begin{theorem}[Log-Ergodic Theorem]\label{LET}
Let $\mu$ be an $f$-invariant probability measure, let $A$ be a measurable set s.t $\mu(A)>0$, and let $g
\in L^1(\mu)$
. Then for $\mu$-a.e $x\in A$,
$$\lim_{r\to0}\limsup_{n\to\infty}\frac{-1}{n}\log\frac{1}{\mu(B(x,n,e^{-nr}))}\int_{B(x,n,e^{-nr})}\mathbb{1}_A(y)\cdot e^{-|\sum_{j=0}^{n-1}g\circ f^{-j}(y)-\sum_{j=0}^{n-1}g\circ f^{-j}(x)|}d\mu(y)=0.$$
\end{theorem}
\begin{proof}
Let $\mu=\int \mu_x d\mu(x)$ be the ergodic decomposition of $\mu$. Since $g\in L^1(\mu)$, for $\mu$-a.e $x$, $g\in L^1(\mu_x)$. Then for $\mu$-a.e $x$, $\lim_{n\to\infty}\frac{1}{n}\sum_{j=0}^{n-1}(g-\int gd\mu_x)\circ f^{-j}(x)=0$. Let $\delta>0$, and let $n_\delta\geq0$ s.t $\mu(A_\delta)\geq e^{-\delta}\mu(A)$ where $$A_\delta:=\left\{y\in A: \forall n\geq n_\delta, \left|\sum_{j=0}^{n-1}(g-\int gd\mu_y)\circ f^{-j}(y)\right|\leq n\delta\right\}.$$
Let $x\in A_\delta$, then for all $n\geq n_\delta$ and $r>0$,
\begin{align*}
\int\limits_{B(x,n,e^{-nr})}&\mathbb{1}_A(y)\cdot e^{-|\sum_{j=0}^{n-1}g\circ f^{-j}(y)-\sum_{j=0}^{n-1}g\circ f^{-j}(x)|}d\mu(y)\\
&\geq \int\limits_{B(x,n,e^{-nr})}\mathbb{1}_{A_\delta}(y)\cdot e^{-|\sum_{j=0}^{n-1} (g-\int gd\mu_y)\circ f^{-j}(y)|-n| \int gd\mu_y-\int gd\mu_x|-|\sum_{j=0}^{n-1} (g-\int gd\mu_x)\circ f^{-j}(x) |}d\mu(y)\\
&\geq e^{-2\delta n} \int_{B(x,n,e^{-nr})}\mathbb{1}_{A_\delta}(y)\cdot e^{-n| G(y)-G(x)|}d\mu(y),
\end{align*}
where $G(y):=\int gd\mu_y$. Then by the log-differentiation lemma (Corollary \ref{BLDT}) for $G$, and since $\delta>0$ was arbitrary, we are done.
\end{proof}

\section{Neutralized local entropy is entropy}\label{EProps0}

\begin{lemma}\label{forReduc}
Let $\mu$ be an $f$-invariant Borel probability, and let $\mu=\int \mu_x d\mu(x)$ be its ergodic decomposition. Then $\h_\mu(x)= h_{\mu_x}(f)$ $\mu$-a.e. 
\end{lemma}
\begin{proof}
	Let $E_\lambda:=\{x: \h_\mu(x)\geq h_{\mu_x}(f)+\lambda\}$, and assume that there exists $\lambda>0$ s.t $\mu(E_\lambda)>0$ (notice that $E_\lambda$ is $f$-invariant). Let $G_\lambda:=\{x: \mu_x(E_\lambda)=1\}$ with $a_\lambda:=\mu(G_\lambda)>0$ (o.w $\mu(E_\lambda)=0$). Write $\mu_\lambda:=\frac{1}{\mu(G_\lambda)}\int_{G_\lambda}\mu_x d\mu(x)$, and $\mu_c:=\frac{1}{a_c}(1-\mu_\lambda)$ where $a_c:=1-a_\lambda$. Then $\mu=a_\lambda \mu_\lambda+a_c\mu_c$ and for $\mu_\lambda$-a.e $x$,
	
$$\h_{\mu_\lambda}(x)\geq \h_\mu(x)\geq h_{\mu_x}(f)+\lambda.$$
However, this is a contradiction, since integrating both sides by $\mu_\lambda$ admits $h_{\mu_\lambda}(f)\geq h_{\mu_\lambda}(f)+\lambda$, a contradiction! Hence $\h_\mu(x)\leq h_{\mu_x}(f)$ $\mu$-a.e, but $\int \h_\mu(x)d\mu(x)=\int h_{\mu_x}(f)d\mu(x)$, hence $\h_\mu(x)=h_{\mu_x}(f)$ $\mu$-a.e.	
\end{proof}

\begin{lemma}\label{hBddBowenBallsCover}
 Let $\mu$ be an $f$-invariant Borel probability, and $K$ be a $\mu$-positive measure set
 , and let $\delta>0$. Assume that $\exists \Delta>0$ s.t $\h_\mu\leq h+\Delta$ for $\mu$-a.e $x\in K$. Then for all $n$ large enough w.r.t $\delta$ and $K$, there exist a measurable subset $K_\delta\subseteq K$ and a subset $\mathcal{A}_{n,\delta}$ and $0<\rho\leq \delta$ s.t 
	\begin{enumerate}
		\item	$\bigcup_{x\in\mathcal{A}_{n,\delta}} B(x,n,\rho)\supseteq K_\delta$,
		\item 	$\frac{\mu(K_{\delta})}{\mu(K)}\geq e^{-\delta}$,
		\item $\#\mathcal{A}_{n,\delta}\leq e^{n(h+\Delta+\delta)}$.
	\end{enumerate}
\end{lemma}
\begin{proof}
	For $\mu$-a.e $x\in K$, $\lim_{r\to0}\limsup\frac{-1}{n}\log\mu(B(x,n,r))\leq h+\Delta$. Set $K_\delta\subseteq K$ and $n_\delta\in\mathbb{N}$ s.t $\forall x\in K_\delta$, $\forall n\geq n_\delta$, $\mu(B(x,n,\frac{\rho}{3}))\geq e^{-n(h+\Delta+\delta)}$, and $\mu(K_\delta)\geq e^{-\delta}\mu(K)$, for some $0<\rho\leq\delta$.

Let $n\geq n_\delta$. Set $K^1:=K_\delta$, and let $x_1\in K^1$. $K^{i+1}:=K^i\setminus B(x_i,n,\rho)$, and choose $x_{i+1}\in K^{i+1}$. 

For any $i\neq j$, $B(x_i,n,\frac{\rho}{3})\cap B(x_j,n,\frac{\rho}{3})=\varnothing$, hence we have at most $e^{n(h+\Delta+\delta)}$-many elements in $\{x_i\}_i$. Moreover, one can check that $\bigcup_{i\leq e^{n(h+\Delta+\delta)}} B(x_i,n,\rho)\supseteq K_\delta$.
\end{proof}

\begin{cor}\label{forC1Thm}
Let $\chi_0>0$, $\tau\in (0,\frac{\chi_0}{100d})$, $\epsilon\geq 4\tau$ small (w.r.t $\chi_0$), $\gamma\geq \sqrt\epsilon$, $\ell\in\mathbb{N}$, and let $n'$ s.t $\mu(K)>0$ where
\begin{align}\label{adon}
K\subseteq\Big\{x\text{ Lyap. reg. s.t }\chi^u_{\min}(x),\chi^s_{\min}(x)\geq \chi_0: &\\
\forall n\geq n',& x\in\Lambda_\ell^{(\underline{\chi}(x),\tau)}\cap\Big(\bigcup_{n(1+\gamma)\leq i\leq n(1+2\gamma)} f^{-i}[\Lambda_\ell^{(\underline{\chi}(x),\tau)}]\Big)\Big\}.\nonumber
\end{align}
Assume that $h_\mu^\mathrm{BK}\leq h+\Delta$ for $\mu$-a.e $x\in K$. Then there exists $K_\tau\subseteq K$ s.t $\mu(K_\tau)\geq e^{-\tau}\mu(K)$ and for all $n$ large enough, $\exists \mathcal{A}_n\subseteq K_\tau$ s.t $\bigcup_{x\in\mathcal{A}}B(x,n,e^{-\epsilon n})\supseteq K_\tau$ and $\#\mathcal{A}_n\leq e^{n(1+2\gamma)(h+\Delta+3d\epsilon)}$.
\end{cor}
\begin{proof}
Let $K_
\tau$ and $n_\tau$ as in Lemma \ref{hBddBowenBallsCover}, for some $0<\rho\leq\tau$. 


Let $n\geq  \max\{n_\tau,n'\}$
, and let $\mathcal{C}_{n}$ be a Besicovitch cover of $K_\tau$ by balls of radius $e^{-2\epsilon n}$. 

For each such ball $B
$, we cover $K_
\tau\cap B$ with at most $e^{n(1+2\gamma)(h+\Delta+\tau)}$-many Bowen balls of the form $B(\cdot, \lfloor n(1+2\gamma)\rfloor,\rho)$ by Lemma \ref{hBddBowenBallsCover}. Hence in total we cover $K_\tau$ with at most $B_dC_Me^{2d\epsilon n}e^{n(1+2\gamma)(h+\Delta+\tau)}$-many elements, where $B_d$ is the Besicovitch constant of $M$, and $C_M$ is a constant s.t $\Vol(B(x,e^{-2\epsilon n}))\geq \frac{1}{C_M}e^{-2d\epsilon n}$ for all $x\in M$ and $n$ large.

Let $B\in \mathcal{C}_n$, and let $\mathcal{A}^B_{n,\tau}$ as in Lemma \ref{hBddBowenBallsCover} for $B\cap K_\tau$. Let $x\in \mathcal{A}^B_{n,\tau}$, and notice that for $j_x\in[\lfloor n(1+\gamma)\rfloor, \lfloor n(1+2\gamma)\rfloor]$,
\begin{equation}\label{liliment}
B(x, \lfloor n(1+2\gamma)\rfloor,\rho)\cap B \subseteq B(x, j_x,\rho)\cap B(x,2e^{-2\epsilon n})\subseteq B(x,n,e^{-\epsilon n}),
\end{equation}
 for all $n$ large enough w.r.t $\ell$ and $\epsilon$ (we prove \eqref{liliment} in the end of this lemma). Thus in total, $\{B(x,n,e^{-\epsilon n}): x\in \mathcal{A}^B_{n,\tau},B\in\mathcal{C}_n\}$ is  a cover of $K_\tau$ by exponential Bowen balls, of cardinality bounded by $e^{n(1+2\gamma)(h+\Delta+3d\epsilon)}$ for all $n$ large enough.
 
\medskip To prove \eqref{liliment}, we work with Pesin charts, which is where we need the assumption from \eqref{adon}. 

Let $x\in K$. We wish to show that for all $n$ large enough (w.r.t on $\ell$), $\forall i\in [1,n]$, $\forall y\in B(x,j_x,\rho)$, $f^i(y)\in B(f^i(x),e^{-\epsilon n})$. Letting $\psi_i$ be the Pesin chart of $f^i(x)$, it is enough to show that $|\psi_i^{-1}\circ f^i\circ \psi_0(v_y)|\leq e^{-\frac{5}{4}\epsilon n}$, where $v_y:=\psi_0^{-1}(y)$. 

Write $\psi_i^{-1}\circ f^i\circ \psi_0(v_y)=v=v^s+v^c+v^u$, where $v^t\in E^t(x)$, $t\in\{s,c,u\}$. Set $f_i:=\psi_{i+1}^{-1}\circ f\circ \psi_i$, and $F_i:=f_{i-1}\circ\cdots \circ f_0$. We assume for the simplicity of presentation that all of the negative Lyapunov exponents of $x$ are equal, and that all of the positive Lyapunov exponents of $x$ are equal, otherwise decompose $v_s$ and $v_u$ into corresponding components.

A standard result of Pesin theory tells us that the maps $f_i$ can be put in the form $f_i=\sum_{t\in  \{s,c,u\}}D_t v_t+h_i^t(v)$, $\| h_i^t\|_{C^1}\leq \tau$, and where $D_t$ are linear self-maps of $E^t$, and $ e^{\chi^s(x)+\tau}\geq \|D_s^{-1}\| ,\|D_s\|\leq e^{-\chi^s(x)+\tau}$, $ e^{-\chi^u(x)+\tau}\geq \|D_u^{-1}\| ,\|D_u\|\leq e^{\chi^u(x)+\tau}$, and $\|D_c^{-1}\|, \|D_c\|\leq e^{\tau} $.

Therefore the stable and central components of $F_n(v_y)$ remain small enough, and we are left to bound $v^u=(F_n(v_y))^u$. Since $f^{j_x}(y)\in B(f^j(x),\rho)$, similar contraction estimates hold for $f^{-1}$, and we get $|v_y^u|\leq e^{-(\chi^u(x)-2\tau)n(1+\gamma)}$. Thus, $| (F_n(v))^u |\leq e^{-(\chi^u(x)-2\tau)n(1+\gamma)} e^{(\chi^u(x)+2\tau)n}\leq e^{-\gamma n(\chi_0-4\tau)}\leq e^{-\frac{5}{4}\epsilon n}$ for $\epsilon>0$ small enough (w.r.t $\chi_0$). 
\end{proof}

\begin{theorem}\label{CoolestFormulaForC1Maybe} Let $f\in\mathrm{Diff}^{1+\beta}(M)$, where $M$ is a closed Riemannian manifold with $\mathrm{dim}M=d\geq2$. Let $\mu$ be an $f$-invariant Borel probability on $M$. Then 
	$$\mathcal{E}_\mu(x)=h_\mu^\mathrm{BK}(x)\text{ }\mu\text{-a.e.}$$
\end{theorem}
\begin{proof}
We start with a reduction. Let $\mu=a^+\mu^++a^0\mu^0$ where $\mu^+$ admits a positive Lyapunov exponent a.e, and $\mu^0$ has all exponent less or equal to $0$ a.e. Then $\E_\mu\leq\E_{\mu^+}$, and for $\mu^0$-a.e $x$, $\E_\mu(x)=0= \h_\mu(x)$ (by dimension bounds). Therefore we may assume w.l.o.g that $\mu$ admits a positive Lyapunov exponent a.e. Moreover, write $\mu=\sum_{i\geq 1}a_i\mu_i$ where for every $i$, almost every exponent of $\mu_i$ is greater than $\frac{1}{i}$ in absolute value, then for every $i$, $\mu_i$-a.e, by Lemma \ref{forReduc}, $$\h_\mu= \h_{\mu_i} \overset{?}{\geq}\E_{\mu_i}\geq \E_\mu.$$
Then it is enough to assume that the non-zero Lyapunov exponents of $\mu$ are uniformly bounded from below in absolute value by a constant $\chi_0>0$.

It is enough to assume for contradiction that $\exists \lambda\in(0,\min\{\chi^3,\frac{1}{2}\})$ s.t $\E_\mu\geq \h_\mu+2\lambda$ $\mu$-a.e, since if $\mu=a_\lambda\mu_\lambda+(1-a_\lambda)\mu_{\lambda}^-$ where $a_\lambda>0$ and $\E_\mu\geq h^\mathrm{LY}_\mu+2\lambda$ $\mu_\lambda$-a.e , then once more by Lemma \ref{forReduc},
$$\E_{\mu_\lambda}\geq \E_\mu\geq \h_\mu+2\lambda= \h_{\mu_\lambda}+2\lambda,\text{ }\mu_\lambda\text{-a.e.}$$
Finally, we make the following reduction: let $G_{(\E,h)}:=\{x: \E_\mu(x)=\E\pm \frac{\lambda}{2}, \h_\mu(x)=h\pm \frac{\lambda}{2}\}$, then let $\mu=a_{(\E,h)}\mu'+(\mu-a_{(\E,h)}\mu')$, where $a_{(\E,h)}>0$ and $\mu'$ is carried by $G_{(\E,h)}$. Then, $\mu'$-a.e
\begin{equation}\label{sundayTrail}
	\E_{\mu'}\geq \E_\mu \geq  \h_\mu+2\lambda= \h_{\mu'}+2\lambda.
\end{equation}
Moreover, similarly, we may assume w.l.o.g that $\E_\mu\gg \sqrt{\lambda}$ a.e. Therefore we may assume for contradiction that $\E_\mu$ and $\h_\mu$ are ``almost constant" w.r.t to the gap between them which is uniformly bounded from below.

\medskip
There exists $0<r\leq \frac{\min\{1,\chi_0\}}{3d}\frac{\lambda^4}{4}$ s.t $\mu(A_1)\geq 1-\lambda^3$ where $A_1:=\{x\in A:\limsup\frac{-1}{n}\log\mu(B(x,n,e^{-rn}))>h_\mu^\mathrm{BK}(x)+\frac{7\lambda}{8}\}$.

We can then choose $0<\tau\leq\min\{\frac{r}{400}\}$ and $\ell\in\mathbb{N}$ s.t $\mu(A_2)\geq 1-2\lambda^4$ where $$A_2:=\{x\in A_1\text{ Lyapunov reg.}: x\in \Lambda_\ell^{(\underline{\chi}(x),\tau)}\}.$$ 

Let $\mu=\int \mu_x d\mu(x)$ be the ergodic decomposition of $\mu$. Then by the Markov inequality, $\mu(\{x:\mu_x(A_2)\geq 1-\sqrt{2\lambda^4}\})\geq 1-\sqrt{2\lambda^4}$. Then $\mu(\{x\in A_2: \mu_x(A_2)\geq 1-\sqrt{2}\lambda^2\})\geq 1-2\lambda^2$. So by the ergodic theorem $\exists n_0\in\mathbb{N}$ s.t $\mu(A_3)\geq 1-3\lambda^2$ where $$A_3:=\left\{x\in A_2: \forall n\geq n_0 \exists j\in[n(1+4\lambda^2),n(1+8\lambda^2)]\text{ s.t }f^j(x)\in A_2 \right\}.$$

Let $K_\tau\subseteq A_3$ as in Corollary \ref{forC1Thm} s.t $\mu(K_\tau)>0$ (notice, the restrictions on $K_\tau$ in Corollary \ref{forC1Thm} are given by Lemma \ref{hBddBowenBallsCover}, which in turn are merely Brin-Katok estimates; which are inherited by subsets). Let $N\geq n_0$ large, then for all $x\in K_\tau$ set $$n_x^N:=\min\{n\geq N: \frac{-1}{n}\log\mu(B(x,n,e^{-rn}))>h+\frac{6\lambda}{8}\}.$$
For all $n\geq N$, set $K_n:=\{x\in K_\tau:n_x^N=n\}$.

By Corollary \ref{forC1Thm}, we can cover $K_n$ with a cover whose cardinality is less or equal to $e^{n(1+ 8\lambda^2)(h+ \frac{\lambda}{8}+3dr)}$, of exponential Bowen balls of the form $B(x,n,e^{-nr})$, $x\in K_n$. Hence, $\mu(K_n)\leq e^{n(1+8\lambda^2)(h+\frac{\lambda}{8}+3dr)} \cdot e^{-n(h+\frac{6\lambda}{8})}$, whence $0<\mu(K_\tau)\leq \sum_{n\geq N} e^{-n\frac{\lambda}{8}}\xrightarrow[]{N\to\infty}0$, a contradiction! Hence $h_\mu^\mathrm{BK}\leq \mathcal{E}_\mu\leq h_\mu^\mathrm{BK}$ $\mu$-a.e.
\end{proof}

\noindent\textbf{Remark:} Theorem \ref{CoolestFormulaForC1Maybe} implies that $\mathcal{E}_\mu(x)=\lim_{r\to0}\liminf_{n\to\infty}\frac{-1}{n}\log\mu(B(x,n,e^{-rn}))$ for $\mu$-a.e $x$ since $$h_{\mu}^{\mathrm{BK}}(x)= \lim_{r\to0}\liminf_{n\to\infty}\frac{-1}{n}\log\mu(B(x,n,r)) \leq \lim_{r\to0}\liminf_{n\to\infty}\frac{-1}{n}\log\mu(B(x,n,e^{-rn})) \leq \mathcal{E}_\mu(x)=h_{\mu}^\mathrm{BK}(x).$$

\section{Lower-dimension bounds for invariant measures}\label{LowDim}

In \cite{LedrappierYoungII}, the authors consider an  ergodic $f$-invariant measure $\mu$, where $f$ is a $C^2$ diffeomorphism of a closed Riemannian manifold $M$. The authors consider two important partitions- $\xi^u$ and $\xi^s$ sub-ordinated to the unstable and to the stable laminations respectively, and consider the conditional measures of $\mu$ w.r.t to these partitions. The authors then go on and prove that the conditional measures $\mu_{\xi^u(\cdot)}$ and $\mu_{\xi^s(\cdot)}$ are exact-dimensional for almost every point, with the dimensions being constant and denoted by $d^u$ and $d^s$ respectively. Moreover, the authors prove that the point-wise upper-dimension of $\mu$-a.e point is bounded by $$\overline{d}\leq d^u+d^s+d^c,$$
where $d^c=d^c(\mu):=\#\{0\text{ Lyapunov exponent of }\mu\}$.

In \cite{AsymProd}, the authors extend the results of \cite{LedrappierYoungII} by relaxing the regularity assumption of $f$ to $C^{1+\beta}$ ($\beta>0$) by proving the Lipschitz property of intermediate foliations; and bound from below the point-wise dimension of ergodic invariant measures under the additional assumption of hyperbolicity:
$$d^u+d^s\leq \underline{d}\leq \overline{d}.$$
In particular, it follows as a consequence that a hyperbolic measure is exact-dimensional, since $d^c=0$.

The purpose of this section, is to extend these results, as an application of Theorem \ref{CoolestFormulaForC1Maybe}. We give lower bounds to the point-wise dimension of invariant measures, which coincide with the lower bounds from \cite{AsymProd} when the measure is hyperbolic. In general, invariant measures with $0$ Lyapunov exponents need not be exact-dimensional, and the bounds from \cite{LedrappierYoungII} are tight. In particular, our proof is aimed to be short and accessible by using the neutralized local entropy instead of adapted partitions.

\begin{theorem}\label{LowDimBound}
	Let $\mu$ be an $f$-invariant ergodic measure, where $f\in \mathrm{Diff}^{1+\beta}(M)$ ($\beta>0$ and $M$ a closed Riemannian manifold). Then for $\mu$-a.e $x$,
	$$d^s+d^u\leq \underline{d}(x):=\liminf_{r\to0}\frac{\log\mu(B(x,r))}{\log r}.$$
\end{theorem}
\begin{proof}
		We start with two simple reductions. Given $r>0$, and $\chi>0$, let $r':=\max\{e^{-n\chi}: e^{-n\chi}\leq r, n\in\mathbb{N}\}$. Then,
\begin{align*}
			\frac{\log \mu(B(x,r))}{\log r} =& \frac{\log r'}{\log r} \cdot \frac{\log \mu(B(x,r))}{\log \mu(B(x,r'))} \cdot \frac{\log \mu(B(x,r'))}{\log r'}\\
		\geq &(1-\frac{\chi}{\log r})\cdot \frac{\log \mu(B(x,r'))}{\log r'}.
\end{align*}
Then it is enough to prove $\liminf_{n\to\infty}\frac{\log\mu(B(x,e^{-n\chi}))}{-n\chi}\geq d^u+d^s$, for some $\chi>0$.
The second assumption we make is that $\mu$ admits some non-zero Lyapunov, since otherwise $d^s=d^u=0$, and the statement is trivial. Set $\chi:=\frac{1}{3}\min\{|\chi_i(\mu)|:\chi_i(\mu)\neq 0\}>0$.

\medskip
We now can start with the construction for the proof. Assume that $\mu$ admits positive exponents, and so we prove $\underline{d}\geq d^u$. If $\mu$ has no negative exponents, this completes the proof; the case of negative exponents is treated subsequently. 

Let $\delta\in (0,\chi)$, and let $\epsilon\in(0,\frac{\delta}{3})$ and $n_\delta\in\mathbb{N}$ s.t $e^{-\chi n_\delta}\ll\frac{1}{\ell_\delta^3}$ and $\mu(A_\delta)\geq 1-\delta$ where
\begin{align*}
	A_\delta:=\{x\in \Lambda^{(\underline\chi,\tau)}_{\ell_\delta}:\forall n\geq n_\delta,& \mu_{\xi^u(x)}(B^u(x,e^{-\chi n}))\leq e^{-\chi d^u n+n\delta},\\
	 &\mu(B(x,-n,e^{-\epsilon n}))= e^{-n \E_\mu\pm\delta n},\mu(B(x,n,e^{-\epsilon n}))= e^{-n \E_\mu\pm\delta n},\\
	  &\mu(B(x,-n,n,2e^{-\epsilon n}))=e^{-2\E_\mu n\pm \delta n}\},
\end{align*}
where $\underline\chi=\underline{\chi}(\mu)$ and $\tau\in (0,\frac{\epsilon}{100d})$, and $B(\cdot, -n,e^{-\epsilon n})$ denotes an exponential Bowen balls for $f^{-1}$, while $B(\cdot, -n,n,2e^{-\epsilon n})$ denotes a two-sided exponential Bowen ball. 

By Lemma \ref{LLYD}, there exists $A_\delta'\subseteq A_\delta$ and $m_\delta\geq n_\delta$ s.t $\mu(A_\delta')\geq 1-2\delta$ where\footnote{While formally Lemma \ref{BLDT} is only stated for the limit $\lim_{r\to0}\limsup_{n\to\infty}\frac{-1}{n}\log\frac{\mu(B(x,n,e^{-nr})\cap A)}{\mu(B(x,n,e^{-nr}))}=0$, the quantitative argument extends as is to the quantitative estimate of \eqref{QuantDens} whenever $\epsilon>0$ is small w.r.t $\chi$.} \begin{equation}\label{QuantDens}
A_\delta':=\{x\in A_\delta:\forall n\geq m_\delta,\frac{1}{n}\log\frac {\mu(B(x,n,e^{-\epsilon n}))}{\mu(B(x,n,e^{-\epsilon n}) \cap A_\delta)} \leq 48d\epsilon\}. 	
\end{equation}

Let $x\in A_\delta'$ which is a Lebesgue density point s.t $\mu(B(x,e^{-n\chi})\cap A_\delta')\geq e^{-\delta} \mu(B(x,e^{-n\chi}))$ for all $n\geq n_x\geq m_\delta$, and let $n\geq n_x$.

Then cover $A_\delta'\cap B(x,e^{-\chi n})$ as in Lemma \ref{BBCL}, by a cover $C^u$ of balls $B(\cdot, n, e^{-\epsilon n})$ with multiplicity bounded by $e^{3d\tau n}$. 

It follows that, 
$$\mu(\bigcup C^u\cap A_\delta)\geq e^{-3d\tau n}\cdot \#C^u\cdot \min_{B\in C^u}\mu(B\cap A_\delta)\geq e^{-3d\tau n}\cdot \#C^u\cdot e^{-n \E_\mu-\delta n}\cdot e^{-48d\epsilon n}.$$
Then, there exists $x'$ s.t $\mu_{\xi^u(x')}(\bigcup C^u\cap A_\delta)\geq \mu(\bigcup C^u\cap A_\delta)$ and so $$\mu_{\xi^u(x')}(\bigcup C^u)\geq \mu_{\xi^u(x')}(\bigcup C^u\cap A_\delta)\geq \mu(\bigcup C^u\cap A_\delta)\geq e^{-3d\tau n}\cdot \#C^u\cdot e^{-n \E_\mu-\delta n-48\epsilon d n},$$ where we may assume w.l.o.g that $x'\in \bigcup C^u\cap A_\delta$.

\medskip
\noindent\underline{Claim:} $\xi^u(x')\cap \bigcup C^u\subseteq B^u(x',e^{-n\chi+2n\epsilon})$.

\noindent\underline{Proof:} Since $\xi^u(x')\cap B(x',3e^{-\epsilon n})\supseteq \bigcup C^u\cap \xi^u(x')$ and is contained in the Pesin chart of $x'$ (recall $x,x'\in \Lambda_{\ell_\delta}^{(\underline\chi,\tau)}$), we get that $d^u(x',z)$ is up to $e^{\pm n\frac{\epsilon}{3}}$, $|x'_u-z_u|$, where $\cdot_u$ denotes the $E^u(x)$ component in the Pesin chart of $x$.

To bound $|x'_u-z_u|$, where $z\in \xi^u(x')\cap\bigcup C^u$, it is enough to bound $|y'_u-y_u|$ for any $y\in A_\delta\cap B(x,e^{-n\chi})$ and $y'\in B(y,n,e^{-\epsilon n})$, by the triangle inequality.

Indeed, $|y'_u-y_u|\leq e^{-n\chi}$ by the chart estimates to satisfy the Bowen ball condition. QED

\medskip
Therefore by the claim, we get in total, $$e^{-(\chi-2\epsilon)n(d^u-\delta)}\geq \mu_{\xi^u(x')}(B^u(x',e^{-\chi n+2\epsilon n}))\geq e^{-3d\tau n}\cdot \#C^u\cdot e^{-n \E_\mu-\delta n-48\epsilon d n}.$$
Thus,
\begin{equation}\label{CuUpperBound}
	\# C^u\leq e^{n(\E_\mu-d^u\chi+6\delta)}.
\end{equation}
This concludes the bound $$\mu(B(x,e^{-\chi n}))\leq e^\delta\cdot \mu(B(x,e^{-\chi n})\cap A_\delta')\leq e^\delta \cdot \#C^u\cdot e^{-n(\E_\mu+\delta)}\leq e^\delta\cdot e^{n(\E_\mu-d^u\chi+6\delta)}\cdot  e^{-n(\E_\mu+\delta)}.$$ Hence $\underline{d}\geq d^u-7\delta$ where $\delta>0$ is arbitrary.

For the case where $\mu$ admits negative exponents as well, construct similarly a cover $C^s$ of $A_\delta'\cap B(x,e^{-\chi n})$ by exponential Bowen balls for $f^{-1}$, denoted by $B(\cdot, -n,e^{-\epsilon n})$. Then similarly one gets that $\#C^s\leq e^{n(\E_\mu-d^s\chi+6\delta)}$. We then define the cover $C:=\{B^s\cap B^u: B^s\in C^s, B^u\in C^u,\text{ and } B^s\cap B^u\cap A_\delta'\neq \varnothing\}$. 

It follows immediately that $\# C\leq \# C^u\cdot \#C^s\leq e^{n(2\E_\mu-d^s\chi-d^u\chi+12\delta)}$. Moreover, for every element $B\in C$, $B\subseteq B(x_B,-n,n,2e^{-\epsilon n})$ where $x_B\in A_\delta'$ by the triangle inequality. Therefore,
$$\mu(B(x,e^{-\chi n}))\leq e^\delta\cdot \# C\cdot e^{-n(2\E_\mu-\delta)}\leq e^\delta\cdot e^{-n(d^s+d^u)\chi+13\delta n}.$$
Since $\delta>0$ was arbitrary, we are done. 
\end{proof}

\noindent\textbf{Remark:} Note that the estimate from above of the measure of a ball in Theorem \ref{LowDimBound} is quite coarse: All elements of $C^u$ and of $C^s$ are of size $\sim e^{-\epsilon n}$ in the central direction, which is much longer than the diameter of the ball $e^{-\chi n}$. While this over-shooting may seem wasteful, one may may not expect an invariant measure to be concentrated in the central-direction, and in fact generally an invariant measure may even be atomic in the central direction, thus the estimate is tight for the general case. In some cases where we have more information regarding the central direction, we may say a bit more, as shown in Corollary \ref{CLeaves}. 
	
\begin{cor}\label{CLeaves}
Under the assumptions of Theorem \ref{LowDimBound}, if $\mu$ admits a measurable lamination by central leaves almost everywhere, then a.e, $$\limsup_{r\to 0}\frac{\log \mu(B(x,r))}{\log r}:=\overline{d} (\mu)=\overline{d}\geq d^s+d^u+\underline{d}^c,$$
where $\underline{d}^c$ is the lower point-wise dimension of the conditional measures on the lamination by central leaves. 
\end{cor}
\begin{proof}
	Let $\xi^c$ be the measurable partition of $\mu$ into central leaves. Notice that $\underline{d}^c$ and $\overline{d}$ are invariant functions, and are constant a.e. 
	Let $\delta>0
	$. We impose the following additional restrictions on the set $A_\delta$ from Theorem \ref{LowDimBound}: $x\in A_\delta\Rightarrow \forall n\geq n_\delta$, $\mu_{\xi^c(x)}(B^c(x,e^{-\chi n}))\leq e^{-\chi \underline{d}^c n+\delta n}$, and $\mu(B(x,e^{-\chi n}))\geq e^{-n\chi \overline{d}-\delta n}$
	.
	
We consider the set $A_\delta''$, which is the set of density points of $A_\delta'$ with dimension bounds. So for $x\in A_\delta''$, and all $n\geq n_\delta'\geq n_\delta$, $$\mu(B(x,e^{-2\epsilon n})\cap A_\delta')\geq e^{-3\epsilon d n}.$$

Then there exists $x''\in B(x,e^{-2\epsilon n})\cap A_\delta'$ s.t $\mu_{\xi^c(x'')}(B(x'',e^{-\epsilon n})\cap A_\delta')\geq e^{-3d\epsilon n}$.

Consider the ``tube" $T_n(x''):=\psi_{x''}[R^s(0,e^{-\chi n})\times R^c(0,e^{-\epsilon n})\times R^u(0,e^{-\chi n})]$, then $\mu_{\xi^c(x'')}(T_n(x'')\cap A_\delta')\geq e^{-3\epsilon d n }$. We consider a Besicovitch cover of $T_n(x'')\cap A_\delta'$ by balls of radius $e^{-\chi n+\epsilon n}$, $C^c$, hence $\#C^c\geq e^{(\chi-\epsilon) n \underline{d}^c-2\delta n -4\epsilon d n}$ for all $n$ large enough.

On the other hand, Theorem \ref{LowDimBound} gives the upper bound $\mu(T_n(x''))\leq e^{-\chi(d^s+d^u)n+13\delta n}$ (recall the remark after Theorem \ref{LowDimBound}). Then, 
since $\mu(B(z,e^{-\chi n}))\geq e^{-\underline{d}\chi n-\delta n}$ with $z\in A_\delta$, we have
$$e^{-\overline{d}\chi n-\delta n}\leq \frac{\mu(T_n(x''))}{\# C^c}\leq \frac{e^{-\chi(d^s+d^u)n+13\delta n}}{e^{(\chi-\epsilon) n \underline{d}^c-2\delta n -4\epsilon d n}},$$
which concludes the proof since $\delta>0$ is arbitrary. 
%
\end{proof}

\bibliographystyle{alpha}
\bibliography{Elphi}

\end{document}